\newtheorem{theorem}{Theorem}
\newtheorem{lemma}[theorem]{Lemma}
\newtheorem{proposition}[theorem]{Proposition}
\theoremstyle{definition}
\theoremstyle{remark}
\numberwithin{equation}{section}
\newcommand{\D}{\mathbb{D}}
\newcommand{\DD}{\widehat{\mathcal{D}}}
\newcommand{\Dd}{\widecheck{\mathcal{D}}}
\newcommand{\DDD}{\mathcal{D}}
\newcommand{\N}{\mathbb{N}}
\newcommand{\C}{\mathbb{C}}
\newcommand{\e}{\varepsilon}
\renewcommand{\phi}{\varphi}
\def\a{\alpha}       \def\b{\beta}        
\def\d{\delta}           \def\e{\varepsilon}
     \def\om{\omega}      
\def\s{\sigma}              
\def\p{\psi}         \def\r{\rho}         \def\z{\zeta}
                  \def\vp{\varphi}
\renewcommand{\H}{\mathcal{H}}
\begin{document}

\title{Compact differences of weighted composition operators}

\keywords{Bergman space, Doubling weight, Weighted composition operator}

\author{Bin Liu}
\address{University of Eastern Finland, P.O.Box 111, 80101 Joensuu, Finland}
\email{binl@uef.fi}

\author{Jouni R\"atty\"a}
\address{University of Eastern Finland, P.O.Box 111, 80101 Joensuu, Finland}
\email{jouni.rattya@uef.fi}

\thanks{The first author is supported by China Scholarship Council}

\begin{abstract}
Compact differences of two weighted composition operators acting from the weighted Bergman space $A^p_\om$ to another weighted Bergman space $A^q_\nu$, where $0<p\le q<\infty$ and $\om,\nu$ belong to the class~$\DDD$ of radial weights satisfying two-sided doubling conditions, are characterized. On the way to the proof a new description of $q$-Carleson measures for $A^p_\om$, with $\om\in\DDD$, in terms of pseudohyperbolic discs is established. This last-mentioned result generalizes the well-known characterization of $q$-Carleson measures for the classical weighted Bergman space $A^p_\alpha$ with $-1<\alpha<\infty$ to the setting of doubling weights.
\end{abstract}

\maketitle

\section{Introduction and main results}

Let $\H(\D)$ denote the space of analytic functions in the unit disc $\D=\{z\in\C:|z|<1\}$. For a nonnegative function $\om\in L^1([0,1))$, the extension to $\D$, defined by $\om(z)=\om(|z|)$ for all $z\in\D$, is called a radial weight. For $0<p<\infty$ and a radial weight $\omega$, the weighted Bergman space~$A^p_\omega$ consists of $f\in\H(\D)$ such that
    $$
    \|f\|_{A^p_\omega}^p=\int_\D|f(z)|^p\omega(z)\,dA(z)<\infty,
    $$
where $dA(z)=\frac{dx\,dy}{\pi}$ is the normalized Lebesgue area measure on $\D$. As usual,~$A^p_\alpha$ stands for the classical weighted
Bergman space induced by the standard radial weight $\omega(z)=(1-|z|^2)^\alpha$, where $-1<\alpha<\infty$. 

For a radial weight $\om$, write $\widehat{\om}(z)=\int_{|z|}^1\om(s)\,ds$ for all $z\in\D$. In this paper we always assume $\widehat{\om}(z)>0$, for otherwise $A^p_\om=\H(\D)$ for each $0<p<\infty$. A weight $\om$ belongs to the class~$\DD$ if
there exists a constant $C=C(\om)\ge1$ such that $\widehat{\om}(r)\le C\widehat{\om}(\frac{1+r}{2})$ for all $0\le r<1$.
Moreover, if there exist $K=K(\om)>1$ and $C=C(\om)>1$ such that $\widehat{\om}(r)\ge C\widehat{\om}\left(1-\frac{1-r}{K}\right)$ for all $0\le r<1$, then we write $\om\in\Dd$. In other words, $\om\in\Dd$ if there exists $K=K(\om)>1$ and $C'=C'(\om)>0$ such that 
	$$
	\widehat{\om}(r)\le C'\int_r^{1-\frac{1-r}{K}}\om(t)\,dt,\quad 0\le r<1.
	$$
The intersection $\DD\cap\Dd$ is denoted by $\DDD$, and this is the class of weights that we mainly work with.

Each analytic self-map $\vp$ of $\D$ induces the composition operator $C_\varphi$ on $\H(\D)$ defined by $C_\varphi f=f\circ\varphi$. The weighted composition operator induced by $u\in\H(\D)$ and $\vp$ is $u C_\vp$ and sends $f\in\H(\D)$ to $u\cdot f\circ\varphi\in\H(\D)$. These operators have been extensively studied in a variety of function spaces. See for example \cite{CMD,CCM,ZR,ZR1,GTE,SJH1,SJH2,SW,SWY}. XXX

If now $\psi$ is another analytic self-map of $\D$, the pair $(\varphi,\psi)$ induces the operator $C_\varphi-C_\psi$. One of the most important problem considering these operators is to characterize compact differences in Hardy spaces. Shapiro and Sundberg~\cite{SS} studied this problem in 1990. Very recently, Choe, Choi, Koo and Yang~\cite{CB}have solved this problem. For more about difference operators, see \cite{BLW,GTE,HO,MJ,SE}. Moorhouse~\cite{MJ,MT} obtain some important results on this operator in weighted Bergman spaces. He showed~\cite{MJ}, among other things, that $C_\varphi-C_\psi$ is compact on $A^2_\alpha$ if and only if 
      \begin{equation}
      \lim_{|z|\rightarrow 1^-}|\delta_1(z)|\left(\frac{1-|z|^2}{1-|\varphi(z)|^2}+ \frac{1-|z|^2}{1-|\psi(z)|^2}\right)= 0,
      \end{equation}
where
     \begin{equation*}
     \delta_1(z)=\frac{\varphi(z)-\psi(z)}{1-\overline{\varphi(z)}\psi(z)},\quad z\in\D.
     \end{equation*}
Saukko~\cite{SE,SEO} generalized this result by showing that if either $1<p\le q$, or $p>q\ge1$, then $C_\varphi-C_\psi:A^p_\alpha\to A^q_\beta$ is compact if and only if the operators $\delta_1C_\varphi$ and $\delta_1C_\psi$ are both compact from $A^p_\alpha$ to $L^q_\beta$. Very recently, Acharyya and Wu~\cite{AW} characterized the compact differences of two weighted composition operators $uC_\varphi-vC_\psi$ between different weighted Bergman spaces $A^p_\alpha$ and $A^q_\beta$, where $0<p\le q< \infty$ and $-1<\alpha,\beta<\infty$. Their result states that, if $\frac{\alpha+2}{p}\le \frac{\b+2}{q}$ and $u$, $v$ $\in \H(\D)$ satisfy 
	$$
	\sup_{z\in\D}\left(|u(z)|+|v(z)|\right)(1-|z|^2)^{\frac{\b+2}{q}-\frac{\alpha+2}{p}}<\infty,
	$$ 
then  $uC_\varphi-vC_\psi:A^p_\alpha\to A^q_\beta$ is compact if and only if 
       \begin{equation}\label{kusi1}
      \lim_{|z|\rightarrow 1^-}|\delta_1(z)|\left(|u(z)|\frac{(1-|z|^2)^{\frac{2+\beta}{q}}}{(1-|\varphi(z)|^2)^{\frac{2+\alpha}{p}}}+ |v(z)|\frac{(1-|z|^2)^{\frac{2+\beta}{q}}}{(1-|\psi(z)|^2)^{\frac{2+\alpha}{p}}}\right)= 0
      \end{equation}
and
      \begin{equation}\label{kusi2}
      \lim_{|z|\rightarrow 1^-}(1-|\delta_1(z)|^2)^{\frac{2+\alpha}{p}}|u(z)-v(z)|\left(\frac{(1-|z|^2)^{\frac{2+\beta}{q}}}{(1-|\varphi(z)|^2)^{\frac{2+\alpha}{p}}}+ \frac{(1-|z|^2)^{\frac{2+\beta}{q}}}{(1-|\psi(z)|^2)^{\frac{2+\alpha}{p}}}\right)= 0.
      \end{equation}

In this paper we characterize compact differences of two weighted composition operators from the weighted Bergman space $A^p_\omega$ to another weighted Bergman space $A^q_\nu$ with $0<p\le q<\infty$ and $\omega,\nu\in\DDD$. To state the result, write
		\begin{equation*}
    \delta_2(z)=\delta_{2,\vp,\psi}(z)=\frac{\psi(z)-\varphi(z)}{1-\overline{\psi(z)}\varphi(z)},\quad z\in\D,
    \end{equation*}
and observe that $|\d_1|=|\d_2|$ on $\D$. Our main result reads as follows.  

\begin{theorem}\label{theorem:0 theorem}
Let $\om,\nu\in\DDD$ and $0<p\le q<\infty$ such that $\widehat{\nu}(z)(1-|z|)\lesssim\left(\widehat{\om}(z)(1-|z|)\right)^{\frac{q}{p}}$ for all $z\in\D$. Further, let $u,v\in\H(\D)$ and $\varphi$ and $\psi$ analytic self-maps of $\D$ such that
	\begin{equation}\label{Eq:sufficiency-hypothesis-1}
	\sup_{z\in\D}\left(|u(z)|+|v(z)|\right)\frac{\left(\widehat{\nu}(z)(1-|z|)\right)^{\frac{1}{q}}}{\left(\widehat{\omega}(z)(1-|z|)\right)^{\frac{1}{p}}}<\infty.
	\end{equation}
Then there exists $\gamma=\gamma(\om,p)>0$ with the following property: $uC_\varphi-vC_\psi:A^p_\om\rightarrow A^q_\nu$ is compact if and only if
      \begin{equation}\label{Eq:sufficiency-hypothesis-3}
      \lim_{|z|\rightarrow 1^-}|\delta_1(z)|
			\left(|u(z)|\frac{\left(\widehat{\nu}(z)(1-|z|)\right)^{\frac{1}{q}}}
			{\left(\widehat{\om}(\varphi(z))(1-|\varphi(z)|)\right)^{\frac{1}{p}}}
			+|v(z)|\frac{\left(\widehat{\nu}(z)(1-|z|)\right)^{\frac{1}{q}}}
			{\left(\widehat{\om}(\psi(z))(1-|\psi(z)|)\right)^{\frac{1}{p}}}\right)= 0
      \end{equation}
and     
			
			\begin{equation}\label{Eq:sufficiency-hypothesis-4}
			\begin{split}
			\lim_{|z|\rightarrow 1^-}&\Bigg(\frac{|1-\overline{\vp(z)}\delta_1(z)|^\gamma}
			{\left(\widehat{\om}(\varphi(z))(1-|\varphi(z)|)\right)^{\frac{1}{p}}}
			+\frac{|1-\overline{\p(z)}\d_2(z)|^\gamma}{\left(\widehat{\om}(\psi(z))(1-|\psi(z)|)\right)^{\frac{1}{p}}}\Bigg)\\
			&\cdot|u(z)-v(z)|\left(\widehat{\nu}(z)(1-|z|)\right)^{\frac{1}{q}}=0.
      \end{split}
			\end{equation}
\end{theorem}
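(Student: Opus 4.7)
The proof splits into necessity and sufficiency.

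\emph{Necessity.} The strategy is to test $uC_\vp - vC_\psi$ against carefully chosen bounded, weakly null sequences in $A^p_\om$. The natural family consists of normalized reproducing-kernel-type functions
\[
F_{a,p}(z) = \frac{H(z,a)}{\bigl(\widehat{\om}(a)(1-|a|)\bigr)^{1/p}}, \qquad a \in \D,
\]
modelled on the reproducing kernels of $A^2_\om$ for $\om \in \DDD$ and satisfying $\|F_{a,p}\|_{A^p_\om} \asymp 1$ together with $F_{a,p} \to 0$ weakly as $|a|\to 1^-$. Given any sequence $z_n \to \partial\D$, plugging $a = \vp(z_n)$ or $a = \psi(z_n)$ separately into the compactness criterion $\|(uC_\vp-vC_\psi)F_{a,p}\|_{A^q_\nu}\to 0$ recovers \eqref{Eq:sufficiency-hypothesis-3}. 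Condition \eqref{Eq:sufficiency-hypothesis-4} is then reached by a two-kernel interpolation: a suitable linear combination of $F_{\vp(z_n),p}$ and $F_{\psi(z_n),p}$ is chosen so that the dominant $u$- and $v$-contributions cancel, leaving (after normalization) a term proportional to $|u(z_n)-v(z_n)|$ modulated by a Möbius factor of the form $|1-\overline{\vp(z_n)}\delta_1(z_n)|^\gamma$.

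\emph{Sufficiency.} The starting point is the algebraic decomposition
\[
(uC_\vp - vC_\psi)f(z) = \bigl(u(z) - v(z)\bigr)f(\vp(z)) + v(z)\bigl(f(\vp(z)) - f(\psi(z))\bigr),
\]
which aligns the two summands with \eqref{Eq:sufficiency-hypothesis-4} and \eqref{Eq:sufficiency-hypothesis-3} respectively. The first summand is controlled by viewing the pullback measure $\mu(E) = \int_{\vp^{-1}(E)}|u-v|^q\,d\nu$ and invoking the new pseudohyperbolic characterization of vanishing $q$-Carleson measures for $A^p_\om$ announced in the abstract; condition \eqref{Eq:sufficiency-hypothesis-4}, with its Möbius factor $|1-\overline{\vp(z)}\delta_1(z)|^\gamma$, is precisely what forces $\mu$ to satisfy the vanishing Carleson condition. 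For the second summand one uses the pseudohyperbolic Lipschitz estimate
\[
|f(\vp(z)) - f(\psi(z))| \lesssim |\delta_1(z)| \sup_{w \in \Delta(\vp(z),r)\cup \Delta(\psi(z),r)} |f(w)|,
\]
combined with standard subharmonic averaging of $|f|^p$ over pseudohyperbolic discs under a $\DDD$-weight; condition \eqref{Eq:sufficiency-hypothesis-3} then yields the vanishing Carleson bound needed to conclude compactness of this piece.

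\emph{Main obstacle.} The central difficulty is quantitative control in the doubling setting, where no explicit formula for the reproducing kernels or for the weight growth is available. Three technical inputs are indispensable: (i) sharp uniform (in $\om \in \DDD$) two-sided bounds $\|F_{a,p}\|_{A^p_\om} \asymp 1$ together with pointwise estimates for $|F_{a,p}(b)|$ in terms of the pseudohyperbolic distance between $a$ and $b$; (ii) the correct choice of $\gamma = \gamma(\om,p)$, which must balance the Möbius factor against the distortion of $\widehat{\om}$ across pseudohyperbolic neighborhoods of $\vp(z)$ and $\psi(z)$; and (iii) a careful patching of the regions where $|\delta_1(z)|$ is small versus bounded below, so that the two summands in the decomposition recombine into a single vanishing Carleson condition compatible with both \eqref{Eq:sufficiency-hypothesis-3} and \eqref{Eq:sufficiency-hypothesis-4}.
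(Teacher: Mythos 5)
Your outline shares the paper's toolkit (normalized test functions for necessity; the pseudohyperbolic Carleson theorem and a local Lipschitz estimate for sufficiency), but at several decisive points it asserts what actually has to be proved, and two of the asserted steps would fail as stated. On the necessity side, testing a \emph{single} family of kernel-type functions at $a=\vp(z_n)$ or $a=\psi(z_n)$ only controls the combined quantity $|u(z)f_a(\vp(z))-v(z)f_a(\psi(z))|$, in which the two terms may cancel; it does not yield \eqref{Eq:sufficiency-hypothesis-3}. The paper decouples $u$ and $v$ by testing a \emph{second} family $F_a=\vp_a f_a$, which vanishes at $a$: with $a=\vp(z)$ this kills the $u$-term and gives decay of $|\d_1(z)||v(z)||f_{\vp(z)}(\psi(z))|$, and a triangle inequality then produces the $u$-part of \eqref{Eq:sufficiency-hypothesis-3} (symmetrically with $a=\psi(z)$). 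Moreover, the paper's test functions are the explicit functions \eqref{eq:testfunctions}, not reproducing kernels: the exact identity $|f_{\vp(z)}(\psi(z))|=|1-\overline{\vp(z)}\d_1(z)|^\gamma\left(\widehat{\om}(\vp(z))(1-|\vp(z)|)\right)^{-1/p}$, together with the elementary bound $|1-w^\gamma|\lesssim|1-w|$ on bounded sets, is precisely what generates the M\"obius factor in \eqref{Eq:sufficiency-hypothesis-4}. For the true kernels of $A^2_\om$ with $\om\in\DDD$ no such pointwise formula is available, so your ``two-kernel interpolation'' (a cancelling linear combination of kernels at $\vp(z_n)$ and $\psi(z_n)$) is both unsubstantiated and unnecessary.

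On the sufficiency side, the decomposition $(u-v)\,f\circ\vp+v\,(f\circ\vp-f\circ\psi)$ without first splitting $\D$ along $E=\{z:|\d_1(z)|<r\}$ does not work. The Lipschitz-type estimate (Lemma~\ref{le1}, or your stated variant) holds with uniform constants only when $\rho(\vp(z),\psi(z))=|\d_1(z)|\le r<1$; where $|\d_1|$ is close to $1$ it degenerates, and there one must instead insert the factor $1\le|\d_1(z)|/r$ and use compactness of $\d_1 uC_\vp$ and $\d_1 vC_\psi$, which is what \eqref{Eq:sufficiency-hypothesis-3} governs. Likewise \eqref{Eq:sufficiency-hypothesis-4} cannot force a vanishing Carleson condition for $\int_{\vp^{-1}(\cdot)}|u-v|^q\,d\nu$ on all of $\D$, because its M\"obius factor $|1-\overline{\p(z)}\d_2(z)|^\gamma$ is bounded below (by $(1-r)^\gamma$) only on $E$; this is exactly why the paper treats the $(u-v)$-term as $(u-v)C_\psi f\chi_E$. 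Finally, the technical core of the sufficiency proof is missing from your sketch: converting the pointwise limits \eqref{Eq:sufficiency-hypothesis-3}--\eqref{Eq:sufficiency-hypothesis-4} into the vanishing of $\vp_*(\cdot)(\Delta(a,r))/\om(\Delta(a,r))^{q/p}$ is not automatic, and in the paper it requires an interpolation with a small exponent $\e$, the verification that the auxiliary function $\mu=\left(\widehat{\om}(z)(1-|z|)\right)^{(1-\e)q/p}\nu(z)/\left(\widehat{\nu}(z)(1-|z|)\right)$ is a weight in $\DDD$, and the boundedness of $C_\vp$ on $A^p_\mu$, combined with the hypothesis \eqref{Eq:sufficiency-hypothesis-1} and Theorem~\ref{Lemma:Carleson}. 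Until these steps are supplied, the proposal is an outline rather than a proof.
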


If $\om(z)=(1-|z|)^{\alpha}$ and $\nu(z)=(1-|z|)^\beta$ for $-1<\alpha,\beta<\infty$, then $\widehat{\om}(z)\asymp(1-|z|)^{\alpha+1}$ and $\widehat{\nu}(z)\asymp(1-|z|)^{\beta+1}$ for all $z\in\D$. Therefore \eqref{Eq:sufficiency-hypothesis-3} reduces to \eqref{kusi1}. Moreover, the proof of Theorem~\ref{theorem:0 theorem} shows that the only requirement for $\gamma=\gamma(\om,p)>0$ appearing in the statement is that 
	$$
	\int_\D\frac{\om(z)}{|1-\overline{a}z|^{\gamma p}}\,dA(z)\le C\frac{\widehat{\om}(a)}{(1-|a|)^{\gamma p-1}},\quad a\in\D,
	$$
for some constant $C=C(\om,p,\gamma)>0$. If $\om(z)=(1-|z|)^{\alpha}$, any $\gamma>\frac{\alpha+2}{p}$ is acceptable, and the choice $\gamma=2\frac{\alpha+2}{p}$ converts \eqref{Eq:sufficiency-hypothesis-4} to \eqref{kusi2}, as a simple computation shows. Therefore Theorem~\ref{theorem:0 theorem} indeed generalizes \cite[Theorem~1]{AW} for weights in $\DDD$.

We need two specific tools for the proof of Theorem~\ref{theorem:0 theorem}. The first one concerns continuous embeddings $A^p_\om\subset L^q_\mu$. Recall that a positive Borel measure $\mu$ on $\D$ is a $q$-Carleson measure for $A^p_\om$ if the identity operator $I_d:A^p_\om\to L^q_\mu$ is bounded. A complete characterization of such measures in the case $\om\in\DD$ can be found in \cite{PelRatEmb}, see also \cite{PR,PelSum14}. In particular, it is known that if $q\ge p$ and $\om\in\DD$, then $\mu$ is a $q$-Carleson measure for $A^p_\om$ if and only if
	$$
	\sup_{a\in\D}\frac{\mu(S(a))}{\om(S(a))^\frac{q}{p}}<\infty.
	$$
Here and from now on $S(a)=\{z:1-|a|<|z|<1,\,|\arg z-\arg a|<(1-|a|)/2\}$ is the Carleson square induced by the point $a\in\D\setminus\{0\}$, $S(0)=\D$ and $\om(E)=\int_E\om dA$ for each measurable set $E\subset\D$. We will need a variant of this result and its ``compact'' counterpart for $\om\in\DDD$ where the Carleson squares are replaced by pseudohyperbolic discs. To this end, denote $\varphi_a(z)=\frac{a-z}{1-\overline{a}z}$ for $a,z\in\D$. The pseudohyperbolic distance between two points $a$ and $b$ in $\D$ is $\rho(a,b)=|\varphi_a(b)|$. For $a\in\D$ and $0<r<1$, the pseudohyperbolic disc of center $a$ and of radius $r$ is $\Delta(a,r)=\{z\in \D:\rho(a,z)<r\}$. It is well known that $\Delta(a,r)$ is an Euclidean disk centered at $(1-r^2)a/(1-r^2|a|^2)$ and of radius $(1-|a|^2)r/(1-r^2|a|^2)$.

\begin{theorem}\label{Lemma:Carleson}
Let $0<p\le q<\infty$, $\om\in\DDD$ and $\mu$ a positive Borel measure on $\D$. Then there exists $r=r(\om)\in(0,1)$ such that the following statements hold:
\begin{itemize}
\item[\rm(i)] $\mu$ is a $q$-Carleson measure for $A^p_\omega$ if
and only if
    \begin{equation}\label{eq:s1}
    \sup_{a\in\D}\frac{\mu\left(\Delta(a,r)\right)}{\left(\om\left(\Delta(a,r)\right)\right)^\frac{q}p}<\infty.
    \end{equation}
Moreover, if $\mu$ is a $q$-Carleson measure for $A^p_\omega$,
then the identity operator satisfies
    $$
    \|I_d\|^q_{A^p_\om\to L^q_\mu}
		\asymp\sup_{a\in\D}\frac{\mu\left(\Delta(a,r)\right)}{\left(\om\left(\Delta(a,r)\right)\right)^\frac{q}p}.
    $$
\item[\rm(ii)] The identity operator $I_d:A^p_{\om}\to L^q_\mu$ is compact if and only if
\begin{equation}\label{eq:s1compact}
    \lim_{|a|\to1^-}\frac{\mu\left(\Delta(a,r)\right)}{\left(\om\left(\Delta(a,r)\right)\right)^\frac{q}p}=0.
    \end{equation}
    \end{itemize}
\end{theorem}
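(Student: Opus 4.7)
The plan is to deduce Theorem~\ref{Lemma:Carleson} from the known Carleson-square characterization of $q$-Carleson measures for $A^p_\om$ with $\om\in\DD$ and its compact counterpart established in \cite{PelRatEmb}, by exploiting a geometric comparison between pseudohyperbolic discs and Carleson squares that is available precisely for $\om\in\DDD$.

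The key lemma to establish first is that there exists $r_0=r_0(\om)\in(0,1)$ such that for every $r\in[r_0,1)$,
$$\om(\Delta(a,r))\asymp\om(S(a))\asymp\widehat{\om}(a)(1-|a|),\quad a\in\D,$$
with implicit constants depending only on $\om$ and $r$. The rightmost equivalence is classical for $\om\in\DD$. The bound $\om(\Delta(a,r))\lesssim\widehat{\om}(a)(1-|a|)$ also follows from $\om\in\DD$ since $\Delta(a,r)$ is contained in a Carleson-type region of comparable scale to $S(a)$. The nontrivial lower bound $\om(\Delta(a,r))\gtrsim\widehat{\om}(a)(1-|a|)$ is where $\om\in\Dd$ plays the decisive role: it guarantees that a definite proportion of $\widehat{\om}(a)$ comes from the annulus $\{z:|a|\le|z|\le 1-(1-|a|)/K\}$, and the geometry of $\Delta(a,r)$ shows that intersecting this annular mass with the angular window of $\Delta(a,r)$ captures at least a fixed fraction of $\widehat{\om}(a)(1-|a|)$ provided $r$ is close enough to $1$.

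Granted this comparison, part (i) follows at once: the quantities $\mu(\Delta(a,r))/\om(\Delta(a,r))^{q/p}$ and $\mu(S(a))/\om(S(a))^{q/p}$ are comparable after a routine bounded-overlap covering argument, since each Carleson square is contained in a bounded number of pseudohyperbolic discs of radius $r$ with centers in $\D$, and conversely each $\Delta(a,r)$ is contained in a bounded number of Carleson squares of comparable scale. Invoking \cite{PelRatEmb} then yields both the equivalence of boundedness of $I_d:A^p_\om\to L^q_\mu$ with \eqref{eq:s1} and the asserted asymptotic identity for $\|I_d\|^q_{A^p_\om\to L^q_\mu}$. Part (ii) is handled analogously: the same covering translates \eqref{eq:s1compact} into its Carleson-square counterpart, whose equivalence with compactness of $I_d$ is also contained in \cite{PelRatEmb}. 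A more self-contained route for (ii) is available: for sufficiency use the sub-mean-value estimate $|f(z)|^p\lesssim\om(\Delta(z,r))^{-1}\int_{\Delta(z,r)}|f|^p\om\,dA$ valid for $f\in A^p_\om$, combined with a covering $\{\Delta(a_k,r)\}$ of $\D$ of bounded overlap and the $(q/p)$-superadditivity of $x\mapsto x^{q/p}$; for necessity test against the $A^p_\om$-normalized family $f_a(z)=\widehat{\om}(a)^{-1/p}(1-|a|)^{-1/p}\bigl((1-|a|)/(1-\overline{a}z)\bigr)^\gamma$ with $\gamma$ large depending on $\om$ and $p$, which is bounded in $A^p_\om$, tends to zero uniformly on compact subsets of $\D$, and satisfies $|f_a|\gtrsim\om(\Delta(a,r))^{-1/p}$ on $\Delta(a,r)$.

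The main obstacle is the weight comparison above, and specifically the lower bound $\om(\Delta(a,r))\gtrsim\widehat{\om}(a)(1-|a|)$. It is precisely this inequality that forces $\om\in\DDD$ rather than merely $\om\in\DD$: without the two-sided doubling condition $\om\in\Dd$, mass could concentrate just outside the angular window of $\Delta(a,r)$ and the comparison would fail. Once this geometric input is secured, the rest of the argument reduces to a soft covering reduction to \cite{PelRatEmb}, or equivalently to a direct adaptation of the standard Carleson-measure machinery to the pseudohyperbolic setting.
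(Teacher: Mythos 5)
Your key geometric lemma ($\om(\Delta(a,r))\asymp\widehat{\om}(a)(1-|a|)$ for $r=r(\om)$ close to $1$, the lower bound coming from $\Dd$) and your test-function argument for necessity are exactly the ingredients the paper uses, and they are sound. The first genuine gap is the covering claim that ``each Carleson square is contained in a bounded number of pseudohyperbolic discs of radius $r$'': this is false, since $S(a)$ accumulates at a boundary arc while any finite union of discs $\Delta(b_j,r)$ is a relatively compact subset of $\D$. Hence the transfer of \eqref{eq:s1} (and of \eqref{eq:s1compact}) to the Carleson-square conditions of \cite{PelRatEmb} is not a routine bounded-overlap matter. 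It can be repaired, but only with a countable dyadic covering of $S(a)$ by discs $\Delta(a_{n,j},r)$ with $1-|a_{n,j}|\asymp 2^{-n}(1-|a|)$ and about $2^n$ discs in generation $n$; one must then sum $\sum_{n}2^{n}\bigl(\widehat{\om}(a_{n,j})(1-|a_{n,j}|)\bigr)^{q/p}$, and this converges to a bound $\lesssim\bigl(\widehat{\om}(a)(1-|a|)\bigr)^{q/p}$ only after invoking the left-hand inequality of \eqref{Eq:characterization-D} (that is, $\om\in\Dd$ again) together with $q\ge p$ to get a geometric series. As written the step fails, and the missing quantitative input must be supplied; you should also verify that the vanishing-Carleson-square characterization of compactness you invoke is actually contained in \cite{PelRatEmb}, which is precisely what the paper avoids by proving compactness directly.

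The second gap is in your ``self-contained route'' for (ii): the sub-mean-value estimate $|f(z)|^p\lesssim\om(\Delta(z,r))^{-1}\int_{\Delta(z,r)}|f|^p\om\,dA$ is false for general $\om\in\DDD$. A weight in $\DDD$ may vanish identically on pseudohyperbolic discs of fixed radius tending to the boundary, which is exactly why the paper formulates Lemma~\ref{le1} with $\widetilde{\om}$ instead of $\om$. For instance, let $\om$ be supported on very thin annuli centred at the radii $1-2^{-n}$, the $n$th annulus carrying mass $2^{-n}$; then $\om\in\DDD$, but for a point $z$ lying between two consecutive annuli the set $\supp\om\cap\Delta(z,r)$ is a finite union of circular arcs at positive distance from $z$, and Runge's theorem produces polynomials $P$ with $P(z)$ close to $1$ and $|P|$ arbitrarily small on those arcs, so no uniform constant can work. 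The correct substitute is $|f(z)|^p\lesssim(1-|z|)^{-2}\int_{\Delta(z,r)}|f|^p\,dA$, which after multiplication by $\widehat{\om}(z)(1-|z|)\gtrsim\om(\Delta(z,r))$ gives an estimate against $\widetilde{\om}$, and one concludes with the norm equivalence \eqref{eq:equivalent norms}; this, combined with Minkowski's inequality in continuous form and an $r$-lattice, is precisely how the paper proves both the sufficiency in (i) and the sufficiency in (ii).
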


Another result needed is a lemma that allows us to estimate the distance between images of two points, say $z$ and $a$, under $f$ sufficiently accurately whenever $z$ is close to $a$ in the sense that $z\in\Delta(a,r)$, and $f\in A^p_\om$ with $\om\in\DDD$. For the statement, denote $\widetilde{\om}(z)=\widehat{\om}(z)/(1-|z|)$ for all $z\in\D$. 

\begin{lemma}\label{le1}
Let $\om\in\DDD$, $0<p\le q<\infty$ and $0<r<R<1$. Then there exists a constant $C=C(\om,p,q,r,R)>0$ such that
     \begin{equation}\label{Eq:lemma}
     |f(z)-f(a)|^q\le C\frac{\rho(z,a)^q}{\left(\widehat{\omega}(a)(1-|a|)\right)^{\frac{q}{p}}}
		\int_{\Delta(a,R)}|f(\zeta)|^p\widetilde{\om}(\zeta)\,dA(\zeta), \quad a\in \D,\quad z\in\Delta(a,r), 
     \end{equation}
for all $f\in A^p_\om$ with $\|f\|_{A^p_\om}\le1$.
\end{lemma}

This lemma plays an important role in the proof of Theorem~\ref{theorem:0 theorem} when we show that \eqref{Eq:sufficiency-hypothesis-3} and \eqref{Eq:sufficiency-hypothesis-4} are sufficient conditions for the compactness. By \cite[Proposition~5]{JJK} we know that 
	\begin{equation}\label{eq:equivalent norms}
	\|f\|_{A^p_{\widetilde{\om}}}\asymp\|f\|_{A^p_\om},\quad f\in\H(\D),
	\end{equation}
provided $\om\in\DDD$. This explains the appearance of the weight $\widetilde{\om}$ on the right hand side of \eqref{Eq:lemma}. It is worth observing that, despite of \eqref{eq:equivalent norms}, the strictly positive weight $\widetilde{\om}$ cannot be replaced by $\om$ in the statement because $\omega\in\DDD$ may vanish in pseudohyperbolic discs of fixed radius that tend to the boundary.

The rest of the paper contains the proofs of the results stated above. We first prove Lemma~\ref{le1} in the next section. The proof of the result on Carleson measures, Theorem~\ref{Lemma:Carleson}, is given in Section~\ref{Sec:Carleson}, and finally, Theorem~\ref{theorem:0 theorem} is proved in Section~\ref{Sec:proof-of-main}.

To this end, couple of words about the notation used in the sequel. The letter $C=C(\cdot)$ will denote an absolute constant whose value depends on the parameters indicated
in the parenthesis, and may change from one occurrence to another.
We will use the notation $a\lesssim b$ if there exists a constant
$C=C(\cdot)>0$ such that $a\le Cb$, and $a\gtrsim b$ is understood
in an analogous manner. In particular, if $a\lesssim b$ and
$a\gtrsim b$, then we write $a\asymp b$ and say that $a$ and $b$ are comparable.

\section{Proof of Lemma~\ref{le1}}

It is known that if $\om\in\DDD$, then there exist constants $0<\alpha=\alpha(\om)\le\b=\b(\om)<\infty$ and $C=C(\om)\ge1$ such that
	\begin{equation}\label{Eq:characterization-D}
	\frac1C\left(\frac{1-r}{1-t}\right)^\alpha
	\le\frac{\widehat{\om}(r)}{\widehat{\om}(t)}
	\le C\left(\frac{1-r}{1-t}\right)^\beta,\quad 0\le r\le t<1.
	\end{equation}
In fact, this pair of inequalities characterizes the class $\DDD$ because the right hand inequality is satisfied if and only if $\om\in\DD$ by \cite[Lemma~2.1]{PelSum14} while the left hand inequality describes the class $\Dd$ in an analogous way, see \cite[(2.27)]{PelRat2020}. The chain of inequalities \eqref{Eq:characterization-D} will be frequently used in the sequel.

To prove the lemma, let $a\in\D$, $0<r<1$ and $z\in\Delta(a,r)$. Then 
      \begin{equation}\label{Eq:1}
			\begin{split}
      |f(z)-f(a)|^p
			&=|f(\varphi_a(\varphi_a(z)))-f(\varphi_a(0))|^p
			=\left|\int_0^{\varphi_a(z)}(f\circ\varphi_a)'(\xi)\,d\xi\right|^p\\
      &\le\max_{\xi\in \overline{D(0,|\varphi_a(z)|)}}|(f\circ\varphi_a)'(\xi)|^p|\varphi_a(z)|^p
			\le\max_{\xi\in \overline{D(0,r)}}|(f\circ\varphi_a)'(\xi)|^p|\varphi_a(z)|^p.
      \end{split}
			\end{equation}
Let $R\in(r,1)$ and set $R'=\frac{r+R}{2}$. Further, let $0<s<1$. Then the Cauchy integral formula for the derivative and the subharmonicity of $|f|^p$ yield
      \begin{equation}\label{Eq:2}
			\begin{split}
      |(f\circ\varphi_a)'(\xi)|^p
			&=\left|\frac{1}{2\pi}\int_{|w|=R'}\frac{(f\circ\varphi_a)(w)}{(w-\xi)^2}\,dw\right|^p
      \le\left(\frac{2}{R-r}\right)^{2p}\left(R'\right)^p\max_{|w|=R'}|f(\varphi_a(w))|^p\\
      &\lesssim\frac{4}{\pi s^2} \max_{|w|=R'}\frac{1}{(1-|\varphi_a(w)|)^2}\int_{\Delta(\varphi_a(w),s)}|f(\zeta)|^p\,dA(\zeta)\\
      &\lesssim\max_{|w|=R'}\frac{1}{(1-|a|)^2}\int_{\Delta(\varphi_a(w),s)}|f(\zeta)|^p\,dA(\zeta),\quad \xi\in\overline{D(0,r)}.
      \end{split}
			\end{equation}
Fix now $s=s(r,R)\in(0,1)$ sufficiently small such that $\Delta(\varphi_a(w),s)\subset\Delta(a,R)$ for all $w$ such that $|w|=R'$. Further, an application of the right hand inequality in \eqref{Eq:characterization-D} shows that $\widehat{\om}(\zeta)\asymp\widehat{\om}(a)$ for all $\zeta\in\Delta(a,R)$. Therefore, by combining \eqref{Eq:1} and \eqref{Eq:2} we deduce 
      \begin{equation*}
			\begin{split}
      |f(z)-f(a)|^p
			&\lesssim\frac{|\varphi_a(z)|^p}{(1-|a|)^2}\int_{\Delta(a,R)}|f(\zeta)|^p\,dA(\zeta)\\
      &\lesssim\frac{|\varphi_a(z)|^p}{\widehat{\om}(a)(1-|a|)}\int_{\Delta(a,R)}|f(\zeta)|^p~\widetilde{\om}(\zeta)\,dA(\zeta),
			\quad a\in\D,\quad z\in\Delta(a,r).
      \end{split}
			\end{equation*}
This proves the case $p=q$ because $|\varphi_a(z)|=\rho(z,a)$ for all $a,z\in\D$. This part of the proof is valid for all $f\in\H(\D)$ if $\om\in\DD$.

Let now $q>p$, and observe that trivially $|f(z)-f(a)|^q=\left(|f(z)-f(a)|^p\right)^{\frac{q}{p}}$. An application of the case $q=p$ implies      
      \begin{equation*}
			\begin{split}
      |f(z)-f(a)|^q
			&\lesssim\frac{\rho(z,a)^q}{\left(\widehat{\omega}(a)(1-|a|)\right)^{\frac{q}{p}}}
			\left(\int_{\Delta(a,R)}|f(\zeta)|^p\widetilde{\om}(\zeta)\,dA(\zeta)\right)^{\frac{q}{p}}\\
			&\le\frac{\rho(z,a)^q\|f\|_{A^p_{\widetilde{\om}}}^{q-p}}{\left(\widehat{\omega}(a)(1-|a|)\right)^{\frac{q}{p}}}
			\int_{\Delta(a,R)}|f(\zeta)|^p\widetilde{\om}(\zeta)\,dA(\zeta).
      \end{split}
			\end{equation*}
But \eqref{eq:equivalent norms} guarantees $\|f\|_{A^p_{\widetilde{\om}}}\asymp\|f\|_{A^p_\om}\le1$, and thus the assertion in the case $q>p$ follows from the above estimate.

\section{Proof of Theorem~\ref{Lemma:Carleson}}\label{Sec:Carleson}

To prove (i), assume first \eqref{eq:s1} and let $0<r<1$. The fact that $|f|^p$ is subharmonic in~$\D$ together with Minkowski's inequality in continuous form (Fubini's theorem in the case $q=p$) and \eqref{eq:s1} imply
	\begin{equation*}
	\begin{split}
	\|f\|_{L^q_\mu}^q&\lesssim\int_\D\left(\int_{\Delta(z,r)}\frac{|f(\zeta)|^p}{(1-|\z|)^2}\,dA(\z)\right)^\frac{q}{p}\,d\mu(z)
	\le\left(\int_\D|f(\zeta)|^p\frac{\mu(\Delta(\zeta,r))^\frac{p}{q}}{(1-|\z|)^2}\,dA(\zeta)\right)^\frac{q}{p}\\
	&\lesssim\left(\int_\D|f(\zeta)|^p\frac{\om(\Delta(\zeta,r))}{(1-|\z|)^2}\,dA(\zeta)\right)^\frac{q}{p},\quad f\in\H(\D).
	\end{split}
	\end{equation*}
Since $\om\in\DDD$ by the hypothesis, we may apply the right hand inequality in \eqref{Eq:characterization-D} to deduce 
	\begin{equation}\label{1111}
	\om(\Delta(\zeta,r))\lesssim\widehat{\om}(\z)(1-|\z|),\quad \z\in\D.
	\end{equation}
It follows that $\|f\|_{L^q_\mu}\lesssim\|f\|_{A^p_{\widetilde{\om}}}$, and hence $\|f\|_{L^q_\mu}\lesssim\|f\|_{A^p_{\om}}$ for all $f\in\H(\D)$ by \eqref{eq:equivalent norms}. Thus $\mu$ is a $q$-Carleson measure $A^p_\om$. 

Conversely, assume that $\mu$ is a $q$-Carleson measure $A^p_\om$. For each $a\in\D$, consider the function 
	\begin{equation}\label{eq:testfunctions}
	f_{a}(z)=\left(\frac{1-|a|^2}{1-\overline{a}z}\right)^\gamma\frac{1}{\left(\widehat{\om}(a)(1-|a|)\right)^{\frac{1}{p}}}
	=\frac{\left(1-\overline{a}\vp_a(z)\right)^\gamma}{\left(\widehat{\om}(a)(1-|a|)\right)^{\frac{1}{p}}},\quad z\in\D,
	\end{equation}
induced by $\om$ and $0<\gamma,p<\infty$. Then \cite[Lemma~2.1]{PelSum14} implies that for all $\gamma=\gamma(\om,p)>0$ sufficiently large we have $\|f_{a}\|_{A^p_\om}\asymp1$ for all $a\in\D$. Therefore the assumption yields
	$$
	1\asymp\|f_a\|^q_{A^p_\om}\gtrsim\|f_a\|_{L^q_\mu}^q\gtrsim\frac{\mu(\Delta(a,r))}{(\widehat{\om}(a)(1-|a|))^\frac{q}{p}},\quad a\in\D,
	$$
that is, $\mu(\Delta(a,r))\lesssim(\widehat{\om}(a)(1-|a|))^\frac{q}{p}$ for all $a\in\D$. Since $\om\in\DDD\subset\Dd$ by the hypothesis, there exists $K=K(\om)>1$ and $C=C(\om)>1$ such that $\widehat{\om}(r)\ge C\widehat{\om}\left(1-\frac{1-r}{K}\right)$ for all $0\le r<1$ by the definition. Fix now $r=r(K)\in(0,1)$ sufficiently large such that 
	$$
	\Delta(a,r)\supset\left\{te^{i\theta}:|a|\le t \le1-\frac{1-|a|}{K},\,\,|\arg a-\theta|\le\frac{1-|a|}{2}\left(1-\frac1K\right)\right\}.
	$$
Then, as $\om\in\DDD\subset\DD$, the right hand inequality in \eqref{Eq:characterization-D} yields
	\begin{equation*}
	\begin{split}
	\om(\Delta(a,r))
	&\ge(1-|a|)\left(1-\frac1K\right)|a|\int_{|a|}^{1-\frac{1-|a|}{K}}\om(s)\,ds\\
	&\ge(C-1)(1-|a|)\left(1-\frac1K\right)|a|\widehat{\om}\left(1-\frac{1-|a|}{K}\right)\\
	&\gtrsim\widehat{\om}(a)(1-|a|)|a|,\quad a\in\D,
	\end{split}
	\end{equation*}
and therefore
	$$
	\mu(\Delta(a,r))\lesssim(\widehat{\om}(a)(1-|a|))^\frac{q}{p}
	\lesssim\left(\frac{\om(\Delta(a,r))}{|a|}\right)^\frac{q}{p},\quad a\in\D\setminus\{0\}.
	$$
The claim \eqref{eq:s1} now follows from these estimates for all $r=r(\om)\in(0,1)$ sufficiently large.

To prove (ii), assume first that $I_d:A^p_\om\to L^q_\mu$ is compact. An application of \cite[Lemma~2.1]{PelSum14} and the right hand inequality in \eqref{Eq:characterization-D} ensure that we may choose $\gamma=\gamma(p,\omega)>0$ sufficiently large such that $\|f_{a}\|_{A^p_\om}\asymp1$ for all $a\in\D$, and $f_{a}\to0$ uniformly on compact subsets of $\D$, as $|a|\to1^-$. Therefore the closure of the set $\{f_{a}:a\in\D\}$ is compact in $L^q_\mu$. Since for each $\e>0$ the open balls $B(f_a,\e)=\{f\in L^q_\mu:\|f_a-f\|_{L^q_\mu}<\e\}$ cover $\overline{\{f_{a}:a\in\D\}}$, there exists a finite subcover $\left\{B(f_{a_n},\e):n=1,\ldots,N=N(\e)\right\}$. Let now $a\in\D$ be arbitrary, and let $j=j(a)\in\{1,\ldots,N\}$ such that $f_a\in B(f_{a_n},\e)$. Then, for each $R\in(0,1)$, we have
	\begin{equation*}
	\begin{split}
	\int_{\D\setminus D(0,R)}|f_{a}(z)|^q\,d\mu(z)
	&\lesssim\int_{\D\setminus D(0,R)}|f_{a}(z)-f_{a_j}(z)|^q\,d\mu(z)
	+\int_{\D\setminus D(0,R)}|f_{a_j}(z)|^q\,d\mu(z)\\
	&\le\|f_a-f_{a_j}\|_{L^q_\mu}^q
	+\max_{n=1,\ldots,N}\int_{\D\setminus D(0,R)}|f_{a_n}(z)|^q\,d\mu(z).
	\end{split}
	\end{equation*}
By fixing $R\in(0,1)$ sufficiently large, and taking into account that $\e>0$ was arbitrary, we deduce
	\begin{equation*}
	\lim_{R\to1^-}\int_{\D\setminus D(0,R)}|f_{a}(z)|^q\,d\mu(z)=0
	\end{equation*}
uniformly in $a$. This together with the uniform convergence yield
	$$
	0=\lim_{|a|\to1^-}\|f_{a}\|_{L^q_\mu}^q
	\ge\lim_{|a|\to1^-}\int_{\Delta(a,r)}|f_{a}(z)|^q\,d\mu(z)
	\gtrsim\lim_{|a|\to1^-}\frac{\mu(\Delta(a,r))}{(\widehat{\om}(a)(1-|a|))^\frac{q}{p}}
	$$
for all $r\in(0,1)$. Now fix $r=r(\om)$ as in the case (i) to have $\widehat{\om}(a)(1-|a|)\lesssim\om(\Delta(a,r))$ for all $a\in\D$. Then we obtain \eqref{eq:s1compact}.

Conversely, assume \eqref{eq:s1compact}. Let $\{f_k\}_{k\in\N}$ be a sequence in $A^p_\om$ such that $\sup_{k\in\N}\|f_k\|_{A^p_\om}=M<\infty$. Then it is easy to see that $\{f_k\}_{k\in\N}$ is uniformly bounded on compact subsets of $\D$ -- this follows, for example, from \eqref{Eq:radial-growth} below. Therefore $\{f_k\}_{k\in\N}$ constitutes a normal family by Montel's theorem, and hence we may extract a subsequence $\{f_{k_j}\}_{j\in\N}$ that converges uniformly on compact subsets of $\D$ to a function $f$ which belongs to $\H(\D)$ by Weierstrass' theorem. Fatou's lemma now shows that $f\in A^p_\om$. For $r\in(0,1)$, fix an $r$-lattice $\{a_n\}_{n\in\N}$. Since $|a_n|\to1$, as $n\to\infty$, we have
	$$
	\lim_{n\to\infty}\frac{\mu(\Delta(a_n,r))}{\om(\Delta(a_n,r))^\frac{q}{p}}=0
	$$
by the hypothesis. Therefore, for each $\e>0$, there exists $N=N(\e)\in\N$ such that 
	$$
	\frac{\mu(\Delta(a_n,r))}{\om(\Delta(a_n,r))^\frac{q}{p}}<\e,\quad n\ge N.
	$$
Hence, as in the case (i), Minkowski's inequality in continuous form (Fubini's theorem in the case $q=p$), \eqref{1111}, \eqref{Eq:characterization-D} and \eqref{eq:equivalent norms} yield
	\begin{equation*}
	\begin{split}
	&\sum_{n=N}^\infty\int_{\Delta(a_n,r)}|f(z)-f_{k_j}(z)|^q\,d\mu(z)\\
	&\quad\lesssim\sum_{n=N}^\infty\int_{\Delta(a_n,r)}\left(\int_{\Delta(z,R)}\frac{|f(\z)-f_{k_j}(\z)|^p}{(1-|\z|)^2}\,dA(\z)\right)^\frac{q}{p}\,d\mu(z)\\
	&\quad\le\sum_{n=N}^\infty\left(\int_{\{\z:\Delta(a_n,r)\cap\Delta(\z,R)\ne\emptyset\}}\frac{|f(\z)-f_{k_j}(\z)|^p}{(1-|\z|)^2}
	\mu\left(\Delta(a_n,r)\right)^\frac{p}{q}dA(\z)\right)^\frac{q}{p}\\
	&\quad\le\e\sum_{n=N}^\infty\left(\int_{\{\z:\Delta(a_n,r)\cap\Delta(\z,R)\ne\emptyset\}}\frac{|f(\z)-f_{k_j}(\z)|^p}{(1-|\z|)^2}
	\om\left(\Delta(a_n,r)\right)dA(\z)\right)^\frac{q}{p}\\
	&\quad\lesssim\e\left(\sum_{n=N}^\infty\int_{\{\z:\Delta(a_n,r)\cap\Delta(\z,R)\ne\emptyset\}}|f(\z)-f_{k_j}(\z)|^p\widetilde{\om}(\z)\,dA(\z)\right)^\frac{q}{p}\\
	&\quad\lesssim\e\|f-f_{k_j}\|^q_{A^p_{\widetilde{\om}}}
	\asymp\e\|f-f_{k_j}\|^q_{A^p_{\om}}
	\lesssim M^q\e.
	\end{split}
	\end{equation*}
Since
	$$
	\lim_{j\to\infty}\sum_{n=1}^{N-1}\int_{\Delta(a_n,r)}|f(z)-f_{k_j}(z)|^q\,d\mu(z)=0
	$$
by the uniform convergence in compact subsets, we deduce
	\begin{equation*}
	\begin{split}
	\limsup_{j\to\infty}\int_\D|f(z)-f_{k_j}(z)|^q\,d\mu(z)
	&\le\limsup_{j\to\infty}\Bigg(\sum_{n=1}^{N-1}\int_{\Delta(a_n,r)}|f(z)-f_{k_j}(z)|^q\,d\mu(z)\\
	&\qquad\qquad+\sum_{n=N}^{\infty}\int_{\Delta(a_n,r)}|f(z)-f_{k_j}(z)|^q\,d\mu(z)\Bigg)\lesssim\e.
	\end{split}
	\end{equation*}
Since $\e>0$ was arbitrary, we have
	$$
	\limsup_{j\to\infty}\int_\D|f(z)-f_{k_j}(z)|^q\,d\mu(z)=0,
	$$
and hence $I_d:A^p_\om\to L^q_\mu$ is compact. This completes the proof of the theorem.

\section{Proof of Theorem~\ref{theorem:0 theorem}}\label{Sec:proof-of-main}

With the auxiliary results proved in the previous sections we are ready for the proof of the main result. We will follow the arguments used in~\cite{AW} with appropriate modifications. The following two propositions will prove Theorem~\ref{theorem:0 theorem}. The first one gives necessary conditions for $uC_\varphi-vC_\psi:A^p_\om\rightarrow A^q_\nu$ to be compact.

\begin{proposition}\label{theorem:1 theorem}
Let $\om,\nu\in\DDD$,  $0<p, q<\infty$, $u,v\in\H(\D)$ and $\varphi$ and $\psi$ be analytic self-maps of $\D$. If $uC_\varphi-vC_\psi:A^p_\om\rightarrow A^q_\nu$ is compact, then
      \begin{equation*}
      \lim_{|z|\rightarrow 1^-}|\delta_1(z)|\left(|u(z)|\frac{\left(\widehat{\nu}(z)(1-|z|)\right)^{\frac{1}{q}}}{\left(\widehat{\om}(\varphi(z))(1-|\varphi(z)|)\right)^{\frac{1}{p}}}+ |v(z)|\frac{\left(\widehat{\nu}(z)(1-|z|)\right)^{\frac{1}{q}}}{\left(\widehat{\om}(\psi(z))(1-|\psi(z)|)\right)^{\frac{1}{p}}}\right)= 0
      \end{equation*}
and there exits $\gamma=\gamma(\om,p)>0$ such that     
     \begin{equation*}
      \lim_{|z|\rightarrow 1^-}\Bigg(\frac{|1-\overline{\vp}(z)\delta_1(z)|^\gamma}{\left(\widehat{\om}(\varphi(z))(1-|\varphi(z)|)\right)^{\frac{1}{p}}}
	  +\frac{|1-\overline{\p}(z)\delta_2(z)|^\gamma}{\left(\widehat{\om}(\psi(z))(1-|\psi(z)|)\right)^{\frac{1}{p}}}\Bigg)|u(z)-v(z)|\left(\widehat{\nu}(z)(1-|z|)\right)^{\frac{1}{q}}=0.
      \end{equation*}
\end{proposition}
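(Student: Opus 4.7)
The plan is to test the compact operator $T=uC_\varphi-vC_\psi$ on the normalized functions $f_a$ from \eqref{eq:testfunctions} and on suitable products of these with M\"obius factors $\varphi_a$. Two tools drive the argument: for $\gamma=\gamma(\om,p)>0$ sufficiently large, \cite[Lemma~2.1]{PelSum14} guarantees $\|f_a\|_{A^p_\om}\asymp 1$ uniformly in $a\in\D$ together with $f_a\to 0$ uniformly on compacta as $|a|\to 1^-$; moreover the standard pointwise growth estimate $|g(z)|\lesssim \|g\|_{A^q_\nu}/(\widehat{\nu}(z)(1-|z|))^{1/q}$ for $g\in A^q_\nu$ with $\nu\in\DDD$ converts $A^q_\nu$-norm bounds into pointwise ones. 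The standard compact-operator characterization then delivers $\|Tg_n\|_{A^q_\nu}\to 0$ whenever $\{g_n\}\subset A^p_\om$ is bounded and $g_n\to 0$ uniformly on compacta.

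To obtain the first limit, I would introduce the test functions
\[
G_z(w)=\varphi_{\psi(z)}(w)\,f_{\varphi(z)}(w), \qquad H_z(w)=\varphi_{\varphi(z)}(w)\,f_{\psi(z)}(w).
\]
Since $|\varphi_a|<1$ on $\D$, both have $A^p_\om$-norm $\lesssim 1$, and along sequences $|z_n|\to 1^-$ with $|\varphi(z_n)|\to 1^-$ (respectively $|\psi(z_n)|\to 1^-$) they tend to zero uniformly on compacta. Using $\varphi_a(a)=0$, $\varphi_{\psi(z)}(\varphi(z))=\delta_2(z)$ and $\varphi_{\varphi(z)}(\psi(z))=\delta_1(z)$, a direct computation gives
\[
(TG_z)(z)=\frac{u(z)\,\delta_2(z)}{\bigl(\widehat{\om}(\varphi(z))(1-|\varphi(z)|)\bigr)^{1/p}},\qquad (TH_z)(z)=\frac{-v(z)\,\delta_1(z)}{\bigl(\widehat{\om}(\psi(z))(1-|\psi(z)|)\bigr)^{1/p}}.
\]
Applying the pointwise estimate at $z=z_n$, appealing to compactness, and exploiting $|\delta_1|=|\delta_2|$ on $\D$, produces each of the two summands of \eqref{Eq:sufficiency-hypothesis-3} individually.

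For the second limit, set $\alpha(z)=(1-\overline{\varphi(z)}\delta_1(z))^\gamma$ and $A(z)=1/(\widehat{\om}(\varphi(z))(1-|\varphi(z)|))^{1/p}$. Testing on $f_{\varphi(z)}$ yields $(Tf_{\varphi(z)})(z)=A(z)\,(u(z)-v(z)\alpha(z))$, while testing on the twisted function $K_z(w)=\varphi_{\varphi(z)}(w)\,f_{\varphi(z)}(w)$ yields $(TK_z)(z)=-v(z)\,\delta_1(z)\,\alpha(z)\,A(z)$. Compactness then supplies $A|u-v\alpha|(\widehat{\nu}(z)(1-|z|))^{1/q}\to 0$ and $A|\alpha||v||\delta_1|(\widehat{\nu}(z)(1-|z|))^{1/q}\to 0$. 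Combining the decomposition $u-v=(u-v\alpha)+v(\alpha-1)$ with the elementary bound $|\alpha-1|\lesssim|\delta_1|$, which is valid because $|\overline{\varphi}\delta_1|<1$ and $|1-(1-s)^\gamma|\lesssim|s|$ on $|s|\le 1$, one obtains
\[
A|\alpha|\,|u-v|\,(\widehat{\nu}(z)(1-|z|))^{1/q} \le A|\alpha|\,|u-v\alpha|\,(\widehat{\nu})^{1/q}+ C\,A|\alpha||v||\delta_1|\,(\widehat{\nu})^{1/q} \to 0,
\]
which is exactly the first summand of \eqref{Eq:sufficiency-hypothesis-4}; the symmetric choice $f_{\psi(z)}$ together with $\varphi_{\psi(z)}f_{\psi(z)}$ produces the second summand.

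The main obstacle is that each of the compactness steps above relies on the corresponding test function tending to zero uniformly on compact subsets of $\D$, which in turn requires $|\varphi(z_n)|\to 1^-$ or $|\psi(z_n)|\to 1^-$ along the chosen subsequence. Along a subsequence where both $\varphi(z_{n_k})$ and $\psi(z_{n_k})$ remain in a fixed compact subset of $\D$, one must argue separately: in this regime the factors $\widehat{\om}(\varphi(z_n))(1-|\varphi(z_n)|)$ and $\widehat{\om}(\psi(z_n))(1-|\psi(z_n)|)$ are bounded below while $\widehat{\nu}(z_n)(1-|z_n|)\to 0$ by the right-hand side of \eqref{Eq:characterization-D}, so both \eqref{Eq:sufficiency-hypothesis-3} and \eqref{Eq:sufficiency-hypothesis-4} reduce to controlling $|u(z_n)|$ and $|v(z_n)|$. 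This in turn is handled by testing $T$ against interpolating functions that separate the interior cluster points of $\varphi(z_{n_k})$ and $\psi(z_{n_k})$ and invoking the boundedness of $T$ together with the Bergman-space growth estimate.
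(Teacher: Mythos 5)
Your testing strategy is essentially the paper's: you use the functions $f_a$ of \eqref{eq:testfunctions} and M\"obius-twisted variants, evaluate at $z$ after centering at $\varphi(z)$ or $\psi(z)$, and the algebra you perform --- the identities for $(TG_z)(z)$ and $(TH_z)(z)$ with $T=uC_\varphi-vC_\psi$, the decomposition $u-v=(u-v\alpha)+v(\alpha-1)$ and the bound $|\alpha-1|\lesssim|\delta_1|$ --- is correct and parallels the paper's computation with $F_a=\varphi_a f_a$. The genuine gap is in the cases where the centers do not approach the boundary, which you defer to your final paragraph. First, your case split is incomplete: you treat only subsequences along which \emph{both} $\varphi(z_{n_k})$ and $\psi(z_{n_k})$ stay in a fixed compact set, but the mixed case (say $\varphi(z_{n_k})\to w_1\in\D$ while $|\psi(z_{n_k})|\to1^-$) is covered by neither half of your argument: there $G_{z_{n_k}}$ does not tend to zero on compacta, so compactness gives nothing for the $u$-summand of \eqref{Eq:sufficiency-hypothesis-3}, and your interior argument as phrased does not apply. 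Second, in the interior case your fix is not a proof: ``interpolating functions that separate the interior cluster points'' cannot work when the cluster points of $\varphi(z_{n_k})$ and $\psi(z_{n_k})$ coincide (there is nothing to separate; one must instead use a quantitative bound such as $|\delta_1(z_{n_k})|\lesssim|\varphi(z_{n_k})-\psi(z_{n_k})|$, available only because the common cluster point is interior). Moreover, since the proposition assumes no growth condition on $u,v$, what you must extract in these cases is that quantities like $|u(z_n)|\left(\widehat{\nu}(z_n)(1-|z_n|)\right)^{1/q}$ tend to zero, not merely stay bounded; your stated estimate $|g(z)|\lesssim\|g\|_{A^q_\nu}\left(\widehat{\nu}(z)(1-|z|)\right)^{-1/q}$ together with boundedness of $T$ only yields boundedness. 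To close these cases you need the little-o refinement, namely $|g(z)|^q\,\widehat{\nu}(z)(1-|z|)\lesssim\int_{\D\setminus D(0,\frac{1+|z|}{2})}|g|^q\nu\,dA\to0$ as $|z|\to1^-$ for each fixed $g\in A^q_\nu$, applied to the fixed functions $T1$ and $T(\mathrm{id})$ (from which one can solve for $u(z_n)$ and $v(z_n)$ when the cluster points differ, and use $|\delta_1|\lesssim|\varphi-\psi|$ when they coincide).

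For comparison, the paper avoids this case analysis altogether: it proves the pointwise bound \eqref{Eq:radial-growth} with a factor $C_\nu(|z|)\to0$ (justified for the family $\{Tf_a\}$ by uniform smallness of tail integrals over a relatively compact subset of $A^q_\nu$), deduces the double limit $\lim_{\max(|a|,|z|)\to1^-}|Tf_a(z)|\left(\widehat{\nu}(z)(1-|z|)\right)^{1/q}=0$, and then substitutes $a=\varphi(z)$ (resp.\ $a=\psi(z)$); since $|z|\to1^-$ forces $\max(|\varphi(z)|,|z|)\to1^-$, no hypothesis on the behaviour of $\varphi(z)$ or $\psi(z)$ is needed. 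If you either adopt that device or carry out the full subsequence analysis (all boundary/interior combinations, with the little-o growth estimate and the coinciding-cluster-point subcase), your argument closes; as written it does not.
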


\begin{proof}
Consider the test functions $f_a$ defined in \eqref{eq:testfunctions}, and set $F_{a}(z)=\varphi_a(z)f_{a}(z)$ for all $a,z\in\D$. Obviously, $\|F_{a}\|_{A^p_\omega}\le\|f_{a}\|_{A^p_\om}$ for all $a\in\D$. Further, by the proof of Theorem~\ref{Lemma:Carleson}, both $f_{a}$ and $F_{a}$ tend to zero uniformly on compact subsets of $\D$ as $|a|\rightarrow 1^-$, and $\|f_{a}\|_{A^p_\om}\asymp1$ for all $a\in\D$ if $\gamma=\gamma(\om,p)>0$ is sufficiently large. Since $uC_\varphi-vC_\psi:A^p_\om \rightarrow A^q_\nu$ is compact by the hypothesis, we therefore have
     \begin{equation}\label{Eq:comp-testing-1}
     \lim_{|a|\rightarrow 1^-}\|uC_\varphi(f_{a})-vC_\psi(f_{a})\|_{A^q_\nu}=0
     \end{equation}
and
		 \begin{equation}\label{Eq:comp-testing-2}
     \lim_{|a|\rightarrow 1^-}\|uC_\varphi(F_{a})-vC_\psi(F_{a})\|_{A^q_\nu}=0.
		 \end{equation}
Also, if $\lim_{|a|\rightarrow 1^-}$ is replaced by $\sup_{a\in\D}$ in the above formulas, then the corresponding quantities are bounded.

We next observe that for each $\om\in\DD$ and $0<q<\infty$, there exists a positive bounded function $C_\om$ on $[0,1)$ such that
	\begin{equation}\label{Eq:radial-growth}
  |f(z)|\le\frac{C_\om(|z|)}{\left(\widehat{\om}(z)(1-|z|)\right)^\frac{1}{q}}\|f\|_{A^q_\om},\quad f\in A^q_\om,\quad z\in\D,
  \end{equation}
and $C_\om(z)\to0$ as $|z|\to1^-$. Namely, for each $z\in\D$ we have
	\begin{equation*}
	\begin{split}
	\|f\|_{A^q_\om}^q
	&\ge\int_{\D\setminus D(0,\frac{1+|z|}{2})}|f(\zeta)|^q\om(\zeta)\,dA(\zeta)
	\ge M_q^q\left(\frac{1+|z|}{2},f\right)\int_{\frac{1+|z|}{2}}^1r\om(r)\,dr\\
	&\ge\frac12M_q^q\left(\frac{1+|z|}{2},f\right)\widehat\om\left(\frac{1+|z|}{2}\right)
	\gtrsim M_q^q\left(\frac{1+|z|}{2},f\right)\widehat\om\left(z\right),
	\end{split}
	\end{equation*}
which combined with the well-known inequality $M_\infty(|z|,f)\lesssim M_q\left(\frac{1+|z|}{2},f\right)(1-|z|)^{-\frac1q}$, valid for all $f\in\H(\D)$, yields \eqref{Eq:radial-growth} because 
	$$
	\int_{\D\setminus D(0,\frac{1+|z|}{2})}|f(\zeta)|^q\om(\zeta)\,dA(\zeta)\to0,\quad |z|\to1^-,
	$$
for each $f\in A^q_\om$.

By combining \eqref{Eq:comp-testing-1} and \eqref{Eq:comp-testing-2} with \eqref{Eq:radial-growth}, we deduce
      \begin{equation*}
      \lim_{\max(|a|,|z|)\rightarrow 1^-} |uC_\varphi(f_{a})(z)-vC_\psi(f_{a})(z)|\left(\widehat{\nu}(z)(1-|z|)\right)^{\frac{1}{q}}=0
      \end{equation*}
and
			\begin{equation*}
			\lim_{\max(|a|,|z|)\rightarrow 1^-} |uC_\varphi(F_{a})(z)-vC_\psi(F_{a})(z)|\left(\widehat{\nu}(z)(1-|z|)\right)^{\frac{1}{q}}=0.
      \end{equation*}
By choosing $a=\varphi(z)$, we obtain
      \begin{equation}\label{Eq:comp-testing-1-evo:1}
      \lim_{|z|\rightarrow 1^-} 
			|u(z)f_{\varphi(z)}(\varphi(z))-v(z)f_{\varphi(z)}(\psi(z))|\left(\widehat{\nu}(z)(1-|z|)\right)^{\frac{1}{q}}=0
			\end{equation}
and
			\begin{equation}\label{Eq:comp-testing-2-evo:1}
      \lim_{|z|\rightarrow 1^-} 
			|\delta_1(z)||v(z)||f_{\varphi(z)}(\psi(z))|\left(\widehat{\nu}(z)(1-|z|)\right)^{\frac{1}{q}}=0.
      \end{equation}             
Since $|\delta_1(z)|<1$ for all $z\in\D$, and
      \begin{equation*}
      |u(z)||f_{\varphi(z)}(\varphi(z))|
			\le|u(z)f_{\varphi(z)}(\varphi(z))-v(z)f_{\varphi(z)}(\psi(z))|
			+|v(z)||f_{\varphi(z)}(\psi(z))|,
      \end{equation*}
by combining \eqref{Eq:comp-testing-1-evo:1} and \eqref{Eq:comp-testing-2-evo:1} we deduce
      \begin{equation}\label{pili}
			\begin{split}
      &\lim_{|z|\rightarrow 1^-}\frac{|\delta_1(z)||u(z)|\left(\widehat{\nu}(z)(1-|z|)\right)^{\frac{1}{q}}}{\left(\widehat{\omega}(\varphi(z))(1-|\varphi(z)|)\right)^{\frac{1}{p}}}\\
			&\quad= \lim_{|z|\rightarrow 1^-}|\delta_1(z)||u(z)||f_{\varphi(z)}(\varphi(z))|\left(\widehat{\nu}(z)(1-|z|)\right)^{\frac{1}{q}}=0.
      \end{split}
			\end{equation}
Further, we claim that for each $0<\gamma<\infty$ and each bounded set $\Omega\subset\C$, there exists a constant $C=C(\gamma,\Omega)>0$ such that 
	\begin{equation}\label{Eq:puskalojo}
	|1-z^\gamma|\le C|1-z|,\quad z\in\Omega,
	\end{equation}
the proof of which is postponed for a moment. By using this and the fact that $1-\frac{1-|a|^2}{1-\overline{a}b}=\overline{a}\vp_a(b)$ for all $a,b\in\D$, we deduce
       \begin{equation*}
			\begin{split}
      |f_{\varphi(z)}(\varphi(z))-f_{\varphi(z)}(\psi(z))|
			&=|f_{\varphi(z)}(\varphi(z))|\left|1-\left(\frac{1-|\varphi(z)|^2}{1-\overline{\varphi(z)}\psi(z)}\right)^\gamma\right|\\
      &\lesssim|f_{\varphi(z)}(\varphi(z))|\left|1-\frac{1-|\varphi(z)|^2}{1-\overline{\varphi(z)}\psi(z)}\right|
      \le|f_{\varphi(z)}(\varphi(z))||\delta_1(z)|,
      \end{split}
			\end{equation*}
and hence
      \begin{equation*}
			\begin{split}
      |u(z)-v(z)||f_{\varphi(z)}(\psi(z))|
			&\le|u(z)f_{\varphi(z)}(\varphi(z))-v(z)f_{\varphi(z)}(\psi(z))|\\
      &\quad+|u(z)||f_{\varphi(z)}(\varphi(z))-f_{\varphi(z)}(\psi(z))|\\
      &\lesssim|u(z)f_{\varphi(z)}(\varphi(z))-v(z)f_{\varphi(z)}(\psi(z))|\\
      &\quad+|\delta_1(z)||u(z)||f_{\varphi(z)}(\varphi(z))|,\quad z\in\D.
      \end{split}
			\end{equation*}
Therefore, by \eqref{Eq:comp-testing-1-evo:1} and \eqref{pili}, we finally obtain
       \begin{equation*}
       \begin{aligned}
       &\lim_{|z|\rightarrow 1^-}\Bigg(\frac{|1-\overline{\vp}(z)\delta_1(z)|^\gamma}{\left(\widehat{\om}(\varphi(z))(1-|\varphi(z)|)\right)^{\frac{1}{p}}}\Bigg)|u(z)-v(z)|\left(\widehat{\nu}(z)(1-|z|)\right)^{\frac{1}{q}}\\
       &=\lim_{|z|\rightarrow 1^-}|u(z)-v(z)||f_{\varphi(z)}(\psi(z))|\left(\widehat{\nu}(z)(1-|z|)\right)^{\frac{1}{q}}=0.
       \end{aligned}
       \end{equation*}   
By following the reasoning above, but with the choice $a=\psi(z)$, we obtain 
       $$
			\lim_{|z|\rightarrow 1^-}\frac{|\delta_1(z)||v(z)|\left(\widehat{\nu}(z)(1-|z|)\right)^{\frac{1}{q}}}{\left(\widehat{\omega}(\psi(z))(1-|\psi(z)|)\right)^{\frac{1}{p}}}=0
			$$
as an analogue of \eqref{pili}, and then eventually
      $$
			\lim_{|z|\rightarrow 1^-}\Bigg(\frac{|1-\overline{\p}(z)\d_2(z)|^\gamma}
			{\left(\widehat{\om}(\psi(z))(1-|\psi(z)|)\right)^{\frac{1}{p}}}\Bigg)|u(z)-v(z)|\left(\widehat{\nu}(z)(1-|z|)\right)^{\frac{1}{q}}=0.
			$$
Therefore to finish the proof of the proposition, it remains to establish \eqref{Eq:puskalojo}. If $z\in\Omega\setminus\{z:|1-z|<1/2\}$, then
	$$
	|1-z^\gamma|\le 1+\sup_{z\in\Omega}|z|^\gamma\le2\left(1+\sup_{z\in\Omega}|z|^\gamma\right)|1-z|,
	$$
while if $z\in\Omega\cap\{z:|1-z|<1/2\}$, we have
	$$
	|1-z^\gamma|=\left|\int_z^1\gamma\zeta^{\gamma-1}\,d\zeta\right|
	\le\gamma\int_z^1|\z|^{\gamma-1}|d\zeta|
	\le\frac{\gamma\max\{1,3^{\gamma-1}\}}{2^{\gamma-1}}|1-z|.
	$$
This proves \eqref{Eq:puskalojo}, and completes the proof of the proposition.
\end{proof}      

Sufficient conditions for the compactness of $uC_\varphi-vC_\psi:A^p_\om\rightarrow A^q_\nu$ are given in the next result.

\begin{proposition}\label{theorem:2 theorem}
Let $\om,\nu\in\DDD$ and $0<p\le q<\infty$ such that $\widehat{\nu}(z)(1-|z|)\lesssim\left(\widehat{\om}(z)(1-|z|)\right)^{\frac{q}{p}}$ for all $z\in\D$. Further, let $u,v\in\H(\D)$ and $\varphi$ and $\psi$ analytic self-maps of $\D$ such that \eqref{Eq:sufficiency-hypothesis-1} is satisfied. If there exists $\gamma>0$ such that \eqref{Eq:sufficiency-hypothesis-3} and \eqref{Eq:sufficiency-hypothesis-4} are satisfied, then $uC_\varphi-vC_\psi:A^p_\om\rightarrow A^q_\nu$ is compact.
\end{proposition}

\begin{proof}
It suffices to show that for any norm bounded sequence $\{f_n\}$ in $A^p_\om$ which tends to zero uniformly on compact subsets of $\D$ as $n\rightarrow\infty$, we have $\|(uC_\varphi-vC_\psi)(f_n)\|_{A^q_\nu}\rightarrow0$ as $n\rightarrow\infty$. For simplicity, assume $\|f_n\|_{A^p_\om}\le1$ for all $n$. Fix $0<r<R<1$, and denote $E=\{z\in\D:|\delta_1(z)|<r\}$ and $E'=\D\setminus E$. Write  
      \begin{equation*}
      (uC_\varphi-vC_\psi)(f_n)=(uC_\varphi-vC_\psi)(f_n)\chi_{E'}+(u-v)C_\psi(f_n)\chi_E+u(C_\varphi-C_\psi)(f_n)\chi_E,
      \end{equation*}
and observe that it is enough to prove that each of the three quantities
      \begin{equation}\label{Eq:3-conditions}
      \|(uC_\varphi-vC_\psi)(f_n)\chi_{E'}\|_{ A^q_\nu},\quad 
			\|(u-v)C_\psi(f_n)\chi_E\|_{ A^q_\nu}\quad\textrm{and}\quad
			\|u(C_\varphi-C_\psi)(f_n)\chi_E\|_{ A^q_\nu}
      \end{equation}
tends to zero as $n\rightarrow\infty$.

We begin with considering the first two quantities in \eqref{Eq:3-conditions}. By the definition of the set $E$ we have the estimates
      \begin{equation*}
      |(uC_\varphi-vC_\psi)(f_n)\chi_{E'}|\le\frac{1}{r}(|\delta_1 uC_\varphi(f_n)|+|\delta_1vC_\psi(f_n)|)
			\end{equation*}
and
			\begin{equation*}
      |(u-v)C_\psi(f_n)\chi_E|\le\left(\frac{1}{1-r}\right)^\gamma|1-\overline{\psi}\delta_2|^\gamma|u-v||C_\psi(f_n)|
      \end{equation*}
on $\D$. Therefore it suffices to prove that $\delta_1uC_\varphi$, $\delta_1vC_\psi$ and $(1-\overline{\p}\d_2)^\gamma(u-v)C_\p$ are compact operators from $A^p_\om$ to $L^q_\nu$. We show in detail that $\delta_1uC_\varphi$ is compact - the same argument shows the compactness of the other two operators.

Let $\mu$ be a finite nonnegative Borel measure on $\D$ and $h$ a measureable function on $\D$. For an analytic self-map $\varphi$ of $\D$, the weighted pushforward measure is defined by 
       \begin{equation}\label{Eq:pusforward}
       \varphi_*(h,\mu)(M)=\int_{\varphi^{-1}(M)}h d\mu
       \end{equation}
for each measurable set $M\subset\D$. If $\mu$ is the Lebesgue measure, we omit the measure in the notation and write $\varphi_*(h)(M)$ for the left hand side of \eqref{Eq:pusforward}. By the measure theoretic change of variable~\cite[Section~39]{PM}, we have $\|\delta_1uC_\varphi(f)\|_{L^q_\nu}=\|f\|_{L^q_{\varphi_*(|\delta_1u|^q\nu)}}$ for each $f\in A^p_\om$. Therefore Theorem~\ref{Lemma:Carleson} shows that $\delta_1 uC_\varphi:A^p_\om\rightarrow L^q_\nu$ is compact if and only if 
	$$
	\frac{\varphi_*(|\delta_1u|^q\nu)(\Delta(a,r))}{\om(\Delta(a,r))^\frac{q}{p}}
	=\frac{\int_{\vp^{-1}(\Delta(a,r))}|\delta_1(z)u(z)|^q\nu(z)\,dA(z)}{\om(\Delta(a,r))^\frac{q}{p}}\to0,\quad |a|\to1^-.
	$$
This is what we prove next. Define 
	$$
	W_{a,r}=\sup_{z\in\varphi^{-1}(\Delta(a,r))}\left|\left(\delta_1(z)u(z)\right)^q\frac{\widehat{\nu}(z)(1-|z|)}{\left(\widehat{\omega}(\varphi(z))(1-|\varphi(z)|)\right)^{\frac{q}{p}}}\right|.
	$$
Then $W_{a,r}\rightarrow 0$, as $|a|\rightarrow 1^-$, by the hypothesis \eqref{Eq:sufficiency-hypothesis-3}. Moreover, for $a\in \D$ and $z\in \varphi^{-1}(\Delta(a,r))$, \eqref{Eq:characterization-D} yields
        \begin{equation*}
        \left|\delta_1(z)u(z)\right|^q
				\lesssim W_{a,r}\frac{\left(\widehat{\omega}(a)(1-|a|)\right)^{\frac{q}{p}}}{\widehat{\nu}(z)(1-|z|)},
        \end{equation*}
and therefore, for each $\e\in(0,1)$ we have 
      \begin{equation}\label{kuli}
			\begin{split}
      \varphi_*(|\delta_1u|^q\nu)(\Delta(a,r))
			&=\int_{\varphi^{-1}(\Delta(a,r))}\left|\delta_1(z)u(z)\right|^q\nu(z)\,dA(z)\\
      &\lesssim\sup_{z\in \D}\left|\left(\delta_1(z)u(z)\right)^q \frac{\widehat{\nu}(z)(1-|z|)}
			{\left(\widehat{\omega}(z)(1-|z|)\right)^{\frac{q}{p}}}\right|^{1-\e}
			W_{a,r}^\e \left(\widehat{\omega}(a)(1-|a|)\right)^{{\e\frac{q}{p}}} \\
      &\quad\cdot\int_{\varphi^{-1}(\Delta(a,r))}\frac{\left(\widehat{\omega}(z)(1-|z|)\right)^{(1-\e)\frac{q}{p}}\nu(z)}
			{\widehat{\nu}(z)(1-|z|)}dA(z),\quad a\in\D.
			\end{split}
      \end{equation}

Before proceeding further, we indicate how to get to this point with the operator $(1-\overline{\p}\d_2)^\gamma(u-v)C_\p$. After the measure theoretic change of variable and an application of Theorem~\ref{Lemma:Carleson}, consider 
	$$
	V_{a,r}= \sup_{z\in \p^{-1}(\Delta(a,r))}
	\left|\left(1-\overline{\p}(z)\d_2(z)\right)^{\gamma q} \left(u(z)-v(z)\right)^q\frac{\widehat{\nu}(z)(1-|z|)}{\left(\widehat{\omega}(\p(z))(1-|\p(z)|)\right)^{\frac{q}{p}}}\right|
	$$ 
instead of $W_{a,r}$. Then $V_{a,r}\rightarrow 0$, as $|a|\rightarrow 1^-$, by the hypothesis \eqref{Eq:sufficiency-hypothesis-4}, and moreover, for $a\in \D$ and $z\in \p^{-1}(\Delta(a,r))$, \eqref{Eq:characterization-D} yields
        \begin{equation*}
        \left|1-\overline{\p}(z)\d_2(z)\right|^{\gamma q} \left|u(z)-v(z)\right|^q
				\lesssim V_{a,r}\frac{\left(\widehat{\omega}(a)(1-|a|)\right)^{\frac{q}{p}}}{\widehat{\nu}(z)(1-|z|)}.
        \end{equation*}
Therefore, for each $\e\in(0,1)$, we have
     \begin{equation*}
			\begin{split}
      &\p_*(\left|1-\overline{\p}\d_2\right|^{\gamma q}|u-v|^q\nu)(\Delta(a,r))\\
      &\lesssim\sup_{z\in \D}
			\left|\left(1-\overline{\p}(z)\d_2(z)\right)^{\gamma q} \left(u(z)-v(z)\right)^q \frac{\widehat{\nu}(z)(1-|z|)}
			{\left(\widehat{\omega}(z)(1-|z|)\right)^{\frac{q}{p}}}\right|^{1-\e}
			V_{a,r}^\e \left(\widehat{\omega}(a)(1-|a|)\right)^{{\e\frac{q}{p}}} \\
      &\quad\cdot\int_{\p^{-1}(\Delta(a,r))}\frac{\left(\widehat{\omega}(z)(1-|z|)\right)^{(1-\e)\frac{q}{p}}\nu(z)}
			{\widehat{\nu}(z)(1-|z|)}dA(z),\quad a\in\D.
			\end{split}
      \end{equation*}  

To estimate this last integral, which is the same as the one appearing in \eqref{kuli}, we first show that there exists $\e=\e(\om,\nu,q,p)\in(0,1)$ sufficiently small such that the function
	$$
	\mu(z)=\mu_{\om,\nu,\e,q,p}(z)
	=\frac{\left(\widehat{\omega}(z)(1-|z|)\right)^{(1-\e)\frac{q}{p}}\nu(z)}{\widehat{\nu}(z)(1-|z|)},\quad z\in\D,
	$$
is a weight and belongs to $\DDD$. To see that $\mu\in\DD$, for $n\in\N\cup\{0\}$, define $\rho_n$ by $\widehat{\nu}(\r_n)=\frac{\widehat{\nu}(0)}{K^n}$, where $K>1$. Let $0\le r<1$ and fix $M\in\N\cup\{0\}$ such that $\r_M\le r<\r_{M+1}$. Set $\alpha=(1-\e)\frac{q}{p}$ for short. Since $\om,\nu\in\DDD$ by the hypothesis, \eqref{Eq:characterization-D} yields
	\begin{equation*}
	\begin{split}
	\widehat{\om}(\r_{j+N+1})
	&\le\widehat{\om}(\r_{j})
	\lesssim\widehat{\om}(\r_{j+N+1})\left(\frac{1-\r_j}{1-\r_{j+N+1}}\right)^{\beta(\omega)}
	\lesssim\widehat{\om}(\r_{j+N+1})\left(\frac{\widehat{\nu}(\r_{j})}{\widehat{\nu}(\r_{j+N+1})}\right)^{\frac{\beta(\omega)}{\alpha(\nu)}}\\
	&=\widehat{\om}(\r_{j+N+1})K^{(N+1)\frac{\beta(\omega)}{\alpha(\nu)}}
	\asymp\widehat{\om}(\r_{j+N+1})
	\end{split}
	\end{equation*}
and $1-\r_j\asymp1-\r_{j+1}$ for all $j$ and for each fixed $N\in\N$. Therefore 
      \begin{equation*}
			\begin{split}
      \int_r^1\frac{(\widehat{\om}(t))^\alpha}{(1-t)^{1-\alpha}}\frac{\nu(t)}{\widehat{\nu}(t)}dt
			&\le\sum_{j=M}^\infty\int_{\r_j}^{\r_{j+1}}\frac{(\widehat{\om}(t))^\alpha}{(1-t)^{1-\alpha}}\frac{\nu(t)}{\widehat{\nu}(t)}\,dt
      \le\sum_{j=M}^\infty\frac{(\widehat{\om}(\r_j))^\alpha}{(1-\r_{j+1})^{1-\alpha}}
			\int_{\r_{j}}^{\r_{j+1}}\frac{\nu(t)}{\widehat{\nu}(t)}dt\\
			&=\sum_{j=M}^\infty\frac{(\widehat{\om}(\r_j))^\alpha}{(1-\r_{j+1})^{1-\alpha}}
			\int_{\r_{j+N}}^{\r_{j+N+1}}\frac{\nu(t)}{\widehat{\nu}(t)}dt
      \asymp\int_{\r_{M+N}}^1\frac{(\widehat{\om}(t))^\alpha}{(1-t)^{1-\alpha}}\frac{\nu(t)}{\widehat{\nu}(t)}\,dt.
       \end{split}
			\end{equation*}
Another application of \eqref{Eq:characterization-D} shows that there exists $N=N(\nu)\in\mathbb{N}$ such that $\r_{M+N}\ge\frac{1+\r_{M+1}}{2}$. Namely, the right hand inequality implies
	$$
	\frac{1-\r_{M+1}}{1-\r_{M+N}}\ge\left(\frac{\widehat{\nu}(\r_{M+1})}{C\widehat{\nu}(\r_{M+N})}\right)^\frac1\b
	=\left(\frac{K^{N-1}}{C}\right)^\frac1{\b}\ge2
	$$
for sufficiently large $N$ giving what we want. Therefore 
      \begin{equation*}
			\begin{split}
      \int_r^1\frac{(\widehat{\om}(t))^\alpha}{(1-t)^{1-\alpha}}\frac{\nu(t)}{\widehat{\nu}(t)}dt
      \lesssim \int_{{\frac{1+\r_{M+1}}{2}}}^1\frac{(\widehat{\om}(t))^\alpha}{(1-t)^{1-\alpha}}\frac{\nu(t)}{\widehat{\nu}(t)}dt 
			\le\int_{{\frac{1+r}{2}}}^1\frac{(\widehat{\om}(t))^\alpha}{(1-t)^{1-\alpha}}\frac{\nu(t)}{\widehat{\nu}(t)}dt,
      \end{split}
			\end{equation*}
which shows that $\mu\in\DD$, provided $\mu$ is a weight. 

We next show that $\mu$ is a weight in $\Dd$. By using \eqref{Eq:characterization-D}, with $\b$ in place of $\a$, we obtain       
		\begin{equation*}
		\begin{split}
    \int_r^1\frac{(\widehat{\om}(t))^\alpha}{(1-t)^{1-\alpha}}\frac{\nu(t)}{\widehat{\nu}(t)}dt
		&\lesssim\int_r^1 \frac{\left(\widehat{\om}(r)\left(\frac{1-t}{1-r}\right)^\b(1-t)\right)^\a}{\widehat{\nu}(t)(1-t)}\nu(t)\,dt\\
		&\lesssim\frac{\widehat{\om}(r)^\a}{(1-r)^{\b\a}} \int_r^1 \frac{(1-t)^{{(1+\b)\a}}}{\widehat{\nu}(t)(1-t)}\nu(t)\,dt.
    \end{split}
		\end{equation*}
Now fix $\e\in(0,1)$ sufficient small such that $\s=(1+\b)\a=(1+\b)(1-\e)\frac{q}{p}>1$. By \cite[Lemma 3]{JAJ}, the last expression above is dominated by a constant times
    \begin{equation*}
		\begin{split}
    \frac{\widehat{\om}(r)^\a}{(1-r)^{\b\a}}(1-r)^{\s-1}
		&=\frac{\left(\widehat{\om}(r)(1-r)\right)^\a}{1-r}.
		\end{split}
		\end{equation*}
Since $\nu\in\Dd$ by the hypothesis, there exists $K=K(\nu)>1$ such that		
		\begin{equation*}
		\begin{split}
    \frac{\left(\widehat{\om}(r)(1-r)\right)^\a}{1-r} 
		&\asymp \frac{\left(\widehat{\om}(r)(1-r)\right)^\a}{\widehat{\nu}(r)(1-r)}\int_r^{1-\frac{1-r}{K}}\nu(t)dt\\
		&\asymp\int_r^{1-\frac{1-r}{K}}\frac{\left(\widehat{\om}(t)(1-t)\right)^\a}{\widehat{\nu}(t)(1-t)} \nu(t)dt,
    \end{split}
		\end{equation*}
where the last step is a consequence of \eqref{Eq:characterization-D}, applied to both weights $\om,\nu\in\DDD$. This reasoning shows that $\mu$ is a weight in $\Dd$, and thus $\mu\in\DDD$.

We return to estimate the last integral in \eqref{kuli}. By \cite[Proposition~18]{PelRatSchatten}, the operator $C_\vp:A^p_\mu\rightarrow A^p_\mu$ is bounded for each $0<p<\infty$. By the measure theoretic change of variable, this is equivalent to saying that $I_d:A^p_\mu\rightarrow L^p_{\vp_\ast(\mu)}$ is bounded. Since we just proved that $\mu\in\DDD$, this is in turn equivalent to $\vp_\ast(\mu)(\Delta(a,r))\lesssim\mu(\Delta(a,r))$ by Theorem~\ref{Lemma:Carleson}. By the definition of these two measures and the poof of the fact $\mu\in\Dd$ above, we have
	$$
	\int_{\varphi^{-1}(\Delta(a,r))}\mu(z)\,dA(z)
	\lesssim\int_{\Delta(a,r)}\mu(z)\,dA(z) 
	\lesssim \left(\widehat{\omega}(a)(1-|a|)\right)^{{(1-\e)\frac{q}{p}}},\quad a\in\D.
	$$
This combined with \eqref{kuli} gives
       \begin{equation*}
       \varphi_*(|\delta_1u|^q\nu)(\Delta(a,r))
			\lesssim\sup_{z\in\D}\left|\left(\delta_1(z)u(z)\right)^q \frac{\widehat{\nu}(z)(1-|z|)}{\left(\widehat{\omega}(z)(1-|z|)\right)^{\frac{q}{p}}}\right|^{1-\e} W_{a,r}^\e \left(\widehat{\omega}(a)(1-|a|)\right)^{{\frac{q}{p}}}.
       \end{equation*}
Since the supremum above is bounded by the hypothesis \eqref{Eq:sufficiency-hypothesis-1}, and $\widehat{\omega}(a)(1-|a|)\lesssim\om(\Delta(a,r))$ for $r=r(\om)\in(0,1)$ sufficiently large by the proof of Theorem~\ref{Lemma:Carleson}, we deduce via Theorem~\ref{Lemma:Carleson} that $\delta_1uC_\varphi:A^p_\om\rightarrow L^q_\nu$ is compact. As mentioned already, $\delta_1vC_\psi$ and $(1-\overline{\psi}\delta_2)^\gamma(u-v)C_\psi$ can be treated in the same way.

It remains to deal with the third term in \eqref{Eq:3-conditions}. By Lemma \ref{le1}, Fubini's theorem and \eqref{Eq:characterization-D}, we have 
      \begin{equation*}
			\begin{split}
      \|u(C_\varphi-C_\psi)(f_n)\chi_E\|^q_{A^q_\nu}
			&=\int_E|u(z)|^q|f_n(\varphi(z))-f_n(\psi(z))|^q\nu(z)\,dA(z)\\
      &\lesssim\int_E\frac{|u(z)\delta_1(z)|^q}{\left(\widehat{\omega}(\varphi(z))(1-|\varphi(z)|)\right)^{\frac{q}{p}}}
			\int_{\Delta(\varphi(z),R)}|f_n(\z)|^p\widetilde{\om}(\z)\,dA(\z)\nu(z)\,dA(z)\\
       &\le\int_{\D}|f_n(\z)|^p\widetilde{\omega}(\z)\\
			&\quad\cdot\left(\int_{\varphi^{-1}(\Delta(\z,R))\cap E}\frac{|u(z)\delta_1(z)|^q}{\left(\widehat{\omega}(\varphi(z))(1-|\varphi(z)|)\right)^{\frac{q}{p}}}\nu(z)\,dA(z)\right)dA(\z)\\
       &\asymp\int_\D|f_n(\z)|^p\widetilde{\om}(\z)
			\left(\int_{\varphi^{-1}(\Delta(\z,R))}\frac{|u(z)\delta_1(z)|^q}{\left(\widehat{\omega}(\z)(1-|\z|)\right)^{\frac{q}{p}}}\nu(z)\,dA(z)\right)dA(\z)\\
       &=\int_\D|f_n(\z)|^p\frac{\varphi_*(|\delta_1 u|^q\nu)(\Delta(\z,R))}{\left(\widehat{\omega}(\z)(1-|\z|)\right)^{\frac{q}{p}}}\widetilde{\omega}(\z)\,dA(\z).
       \end{split}
			\end{equation*}
Since the identity operator from $A^p_\om$ to $L^q_{\varphi_*(|\delta_1u|^q\nu)}$ is compact, it is also bounded. This and Theorem~\ref{Lemma:Carleson} yield
        \begin{equation*}
				\begin{split}
        \|u(C_\varphi-C_\psi)(f_n)\chi_E\|^q_{A^q_\nu}
				&\lesssim\sup_{\z\in D(0,r)}\frac{\varphi_*(|\delta_1u|^q\nu)(\Delta(\z,R))}{\left(\widehat{\omega}(\z)(1-|\z|)\right)^{\frac{q}{p}}}
				\int_{D(0,r)}|f_n(\z)|^p\widetilde{\omega}(\z)\,dA(\z)\\
        &\quad+\sup_{\z\in\D\setminus D(0,r)}\frac{\varphi_*(|\delta_1u|^q\nu)(\Delta(\z,R))}{\left(\widehat{\omega}(\z)(1-|\z|)\right)^{\frac{q}{p}}} \int_{\D\setminus D(0,r)}|f_n(\z)|^p\widetilde{\omega}(\z)\,dA(\z)\\
         &\lesssim\sup_{\z\in D(0,r)}|f_n(\z)|^p + \sup_{\z\in\D\setminus D(0,r)}\frac{\varphi_*(|\delta_1u|^q\nu)(\Delta(\z,R))}{\left(\widehat{\omega}(\z)(1-|\z|)\right)^{\frac{q}{p}}},\quad 0<r<1.
         \end{split}
				\end{equation*}
By choosing $0<r<1$ sufficiently large, the last term can be made smaller than a pregiven $\e>0$. For such fixed $r$, the first term tends to zero as $n\to\infty$ by the uniform convergence. Therefore
	$$
	\|u(C_\varphi-C_\psi)(f_n)\chi_E\|^q_{A^q_\nu}\to0,\quad n\to\infty,
	$$
and hence also the last term in \eqref{Eq:3-conditions} tends to zero. This finishes the proof of the proposition.
\end{proof}

\end{document}